\begin{document}

\newtheorem{thm}{Theorem}
\newtheorem{lem}[thm]{Lemma}
\newtheorem{claim}[thm]{Claim}
\newtheorem{cor}[thm]{Corollary}
\newtheorem{prop}[thm]{Proposition} 
\newtheorem{definition}[thm]{Definition}
\newtheorem{question}[thm]{Open Question}
\newtheorem{conj}[thm]{Conjecture}
\newtheorem{rem}[thm]{Remark}
\newtheorem{prob}{Problem}

\def\ccr#1{\textcolor{red}{#1}}
\def\cco#1{\textcolor{orange}{#1}}
\def\ccg#1{\textcolor{cyan}{#1}}

\newtheorem{ass}[thm]{Assumption}

\newtheorem{lemma}[thm]{Lemma}

\newcommand{\GL}{\operatorname{GL}}
\newcommand{\SL}{\operatorname{SL}}
\newcommand{\lcm}{\operatorname{lcm}}
\newcommand{\ord}{\operatorname{ord}}
\newcommand{\Tr}{\operatorname{Tr}}
\newcommand{\Span}{\operatorname{Span}}

\numberwithin{equation}{section}
\numberwithin{thm}{section}
\numberwithin{table}{section}

\def\vol {{\mathrm{vol\,}}}
\def\squareforqed{\hbox{\rlap{$\sqcap$}$\sqcup$}}
\def\qed{\ifmmode\squareforqed\else{\unskip\nobreak\hfil
\penalty50\hskip1em\null\nobreak\hfil\squareforqed
\parfillskip=0pt\finalhyphendemerits=0\endgraf}\fi}

\def \balpha{\bm{\alpha}}
\def \bbeta{\bm{\beta}}
\def \bgamma{\bm{\gamma}}
\def \blambda{\bm{\lambda}}
\def \bchi{\bm{\chi}}
\def \bphi{\bm{\varphi}}
\def \bpsi{\bm{\psi}}
\def \bomega{\bm{\omega}}
\def \btheta{\bm{\vartheta}}
\def \bmu{\bm{\mu}}
\def \bnu{\bm{\nu}}

\newcommand{\bfxi}{{\boldsymbol{\xi}}}
\newcommand{\bfrho}{{\boldsymbol{\rho}}}

\def\vX{\mathbf X}
\def\vY{\mathbf Y}

\def\cA{{\mathcal A}}
\def\cB{{\mathcal B}}
\def\cC{{\mathcal C}}
\def\cD{{\mathcal D}}
\def\cE{{\mathcal E}}
\def\cF{{\mathcal F}}
\def\cG{{\mathcal G}}
\def\cH{{\mathcal H}}
\def\cI{{\mathcal I}}
\def\cJ{{\mathcal J}}
\def\cK{{\mathcal K}}
\def\cL{{\mathcal L}}
\def\cM{{\mathcal M}}
\def\cN{{\mathcal N}}
\def\cO{{\mathcal O}}
\def\cP{{\mathcal P}}
\def\cQ{{\mathcal Q}}
\def\cR{{\mathcal R}}
\def\cS{{\mathcal S}}
\def\cT{{\mathcal T}}
\def\cU{{\mathcal U}}
\def\cV{{\mathcal V}}
\def\cW{{\mathcal W}}
\def\cX{{\mathcal X}}
\def\cY{{\mathcal Y}}
\def\cZ{{\mathcal Z}}
\def\Ker{{\mathrm{Ker}}}
\def\diag{{\mathrm{diag}}}

\def\sA{{\mathscr A}}

\def\NmQR{N(m;Q,R)}
\def\VmQR{\cV(m;Q,R)}

\def\Xm{\cX_m}

\def \A {{\mathbb A}}
\def \B {{\mathbb A}}
\def \C {{\mathbb C}}
\def \F {{\mathbb F}}
\def \G {{\mathbb G}}
\def \L {{\mathbb L}}
\def \K {{\mathbb K}}
\def \Q {{\mathbb Q}}
\def \R {{\mathbb R}}
\def \Z {{\mathbb Z}}

\def \fA{\mathfrak A}
\def \fC{\mathfrak C}
\def \fL{\mathfrak L}
\def \fR{\mathfrak R}
\def \fS{\mathfrak S}

\def \fUg{{\mathfrak U}_{\mathrm{good}}}
\def \fUm{{\mathfrak U}_{\mathrm{med}}}
\def \fV{{\mathfrak V}}
\def \fG{\mathfrak G}
\def \f{\mathfrak G}

\def\e{{\mathbf{\,e}}}
\def\ep{{\mathbf{\,e}}_p}
\def\eq{{\mathbf{\,e}}_q}

 \def\\{\cr}
\def\({\left(}
\def\){\right)}
\def\fl#1{\left\lfloor#1\right\rfloor}
\def\rf#1{\left\lceil#1\right\rceil}

\def\Im{{\mathrm{Im}}}

\def \ovF {\overline F}

\newcommand{\pfrac}[2]{{\left(\frac{#1}{#2}\right)}}

\def \Prob{{\mathrm {}}}
\newcommand{\disc}{\operatorname{discr}}
\def\e{\mathbf{e}}
\def\ep{{\mathbf{\,e}}_p}
\def\epp{{\mathbf{\,e}}_{p^2}}
\def\em{{\mathbf{\,e}}_m}

\def\Res{\mathrm{Res}}
\def\Orb{\mathrm{Orb}}

\def\vec#1{\mathbf{#1}}
\def \va{\vec{a}}
\def \vb{\vec{b}}
\def \vc{\vec{c}}
\def \vs{\vec{s}}
\def \vu{\vec{u}}
\def \vv{\vec{v}}
\def \vw{\vec{w}}
\def\vlam{\vec{\lambda}}
\def\flp#1{{\left\langle#1\right\rangle}_p}

\def\mand{\qquad\mbox{and}\qquad}

\title[Integer matrices with a given discriminant]
{On the sparsity of non-diagonalisable integer matrices  and matrices with a given discriminant}

\author[A. Ostafe] {Alina Ostafe}
\address{School of Mathematics and Statistics, University of New South Wales, Sydney NSW 2052, Australia}
\email{alina.ostafe@unsw.edu.au}

\author[I. E. Shparlinski] {Igor E. Shparlinski}
\address{School of Mathematics and Statistics, University of New South Wales, Sydney NSW 2052, Australia}
\email{igor.shparlinski@unsw.edu.au}

\begin{abstract}  We consider the set  $\cM_n\(\Z; H\)$ of $n\times n$-matrices with 
integer elements of size at most $H$ and obtain upper  bounds on the number of  matrices  
 from $\cM_n\(\Z; H\)$, for which the characteristic polynomial has a fixed discriminant $d$. When $d=0$, this corresponds to counting matrices with a repeated eigenvalue, and thus is related to counting non-diagonalisable matrices. For $d\ne 0$, this problem seems not to have been studied previously, while for $d=0$, both our approach and the final result improve
 on those of   A.~J.~Hetzel, J.~S.~Liew and K.~Morrison  (2007).    
 \end{abstract}

\subjclass[2020]{11C20, 15B36, 15B52}

\keywords{Integer matrix, non-diagonalisable matrix, characteristic  polynomial, discriminant}

\maketitle

\tableofcontents

\section{Introduction}
 \subsection{Set-up and motivation}
For positive integers $n$ and   $H$, we let $\cM_n\(\Z;H\)$  denote the
set of all  $n\times n$ matrices  
\[
A = \(a_{ij}\)_{1 \le i,j \le n} 
\]  
with integer entries of size $|a_{ij}| \le H$.  In particular, $\cM_n\(\Z; H\)$ is 
of cardinality $\# \cM_n\(\Z; H\) = \(2H +1\)^{n^2}$. 

Various  problems of  {\it arithmetic statistics\/} with matrices  from the set  $\cM_n\(\Z; H\)$  have received a lot of attention
see, for example,~\cite{ALPS, Afif, BOS, E-BLS, HOS} for some 
recent results and further references. Here we consider a rather natural 
question of counting the number $N_n(H)$ of 
matrices  from $\cM_n\(\Z; H\)$ which are not diagonalisable.

We note that previously this question has been studied by   Hetzel, Liew and Morrison~\cite{HLM},
where in fact the number $R_n(H)$ of 
matrices  from $\cM_n\(\Z; H\)$ with a repeated eigenvalue is estimated (observe that we trivially have 
$R_n(H) \ge N_n(H)$).
Note that~\cite[Theorem~2.1]{HLM} merely asserts $R_n(H)  = o(H^{n^2})$ for 
a fixed $n$ and $H\to\infty$. However, analysing the proof of~\cite[Theorem~2.1]{HLM}, 
exploiting  the vanishing of the discriminant of the characteristic polynomial of 
such matrices, one easily sees that  it actually leads to the bound
\begin{equation}
\label{eq:HLM-Bound}
R_n(H)  = O\(H^{n^2-1}\), 
\end{equation}
where hereafter the implied constants depend only on $n$. 

As noted in Remark~\ref{rem:conditional} below, 
conditionally on~\cite[Conjecture~1.1]{HOS} on the upper bound on the number of matrices with a 
give characteristic polynomial, see~\eqref{eq:conj Pn} below,  it is natural to expect that
\begin{equation}
\label{eq:conj Rn}
R_n(H)\le H^{n^2 -2n  +2+o(1)}.
\end{equation} 

One can also interpret $R_n(H)$ as counting matrices in $ \cM_n(\Z; H)$ for which the characteristic polynomial has discriminant equal to zero. We adapt this point of view and count more generally matrices in $ \cM_n(\Z; H)$ for which the characteristic polynomial has a given discriminant. 

For $A\in \cM_n(\Z; H)$, let $f_A\in\Z[X]$ be its characteristic polynomial and we define the {\it discriminant of $A$}, denoted $\disc(A)$, as the discriminant of $f_A$. For $d\in \Z$  fixed, we define
\[
\cR_{n}(d,H)=\{A\in  \cM_n(\Z; H) :~\disc(A)=d\}
\]
and $R_{n}(d,H)=\#\cR_{n}(d,H)$. If $d=0$, then $R_n(H)=R_{n}(0, H)$.

In this paper we present upper bounds for $R_{n}(d,H)$, significantly improving~\eqref{eq:HLM-Bound} for $d=0$. 
It appears that for $d\ne0$  this problem has not been studied before. In this case,  our bound is weaker, however
stronger than the one of the strength of~\eqref{eq:HLM-Bound} (note that the approach of~\cite{HLM}
 can be adapted  to $d \ne 0$ without any problems).

  \subsection{Main results}

We recall that   the notations $U\ll V$  and $V \gg U$ are both equivalent to the statement $|U|\leq c V$, 
 for some  constant $c> 0$, which throughout this work may depend only on  $n$. 
 
 Our first main result gives  a significant improvement of~\eqref{eq:HLM-Bound}. 
 
\begin{thm}
\label{thm:MulEigenval}
We have 
\[
H^{(n^2+n-2)/2}\ll N_n(H) \le R_n(H) \ll H^{n^2 -\Delta_n} \log H,
\]
where 
\[
\Delta_n = \max_{r=1, \ldots, \fl{n/2}} \min\left\{n - \frac{r(r+1)}{2}, r+1 \right\}, 
\]
while  for $n=2,5,7,8$, we also have 
\begin{align*} 
& R_2(H) \le H^{2+o(1)},   &R_5(H) \ll H^{5^2-4}\log H,\\
& R_7(H)\ll H^{7^2-4}\log H,    & R_8(H)\ll H^{8^2-5}\log H.
\end{align*} 
\end{thm}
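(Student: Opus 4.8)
\textbf{The lower bound} is the easy half: I would write down an explicit family of non-diagonalisable matrices. Take $A=(a_{ij})$ lower triangular (so $a_{ij}=0$ whenever $j>i$), impose $a_{11}=a_{22}=:\lambda$, demand $a_{21}\neq 0$, and let the remaining diagonal entries $a_{33},\dots,a_{nn}$ range over values that are pairwise distinct and different from $\lambda$; all strictly lower-triangular entries are otherwise free. Since $A$ is triangular its spectrum is its diagonal, so $\lambda$ is an eigenvalue of algebraic multiplicity exactly $2$. Solving $(A-\lambda I)\vv=0$ from the top down, the first row is vacuous, the second gives $a_{21}v_1=0$ hence $v_1=0$, and rows $3,\dots,n$ then express $v_3,\dots,v_n$ in terms of the single free parameter $v_2$; thus the $\lambda$-eigenspace is one-dimensional and $A$ is not diagonalisable. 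The number of free parameters is $(n-1)$ diagonal values together with $\binom{n}{2}$ strictly lower-triangular entries, which yields $\gg H^{(n-1)+n(n-1)/2}=H^{(n^2+n-2)/2}$ such matrices and gives the claimed lower bound for $N_n(H)$.

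\textbf{For the upper bound} I would pass to the characteristic polynomial. Writing $T(f;H)=\#\{A\in\cM_n(\Z;H):f_A=f\}$, we have $R_n(H)=R_n(0,H)\le\sum_f T(f;H)$, the sum over monic $f\in\Z[X]$ of degree $n$ with $\disc f=0$ (equivalently $\Res(f,f')=0$) and all roots $\ll H$, so that the coefficient of $X^{n-k}$ in $f$ is $\ll H^k$. I would then fix a parameter $r\in\{1,\dots,n\}$ and split $\cR_n(0,H)$ into two parts according to the degree $s$ of a repeated irreducible factor $p$ of $f_A$ (so $p^2\mid f_A$ and $2s\le n$): the part $\mathcal A_r$ with $s\le r$, and the part $\mathcal B_r$ with $s\ge r+1$. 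The plan is to bound $\#\mathcal A_r$ by a \emph{polynomial-counting} estimate — there are comparatively few admissible $f$ once the repeated factor is confined to low degree — producing a saving of $n-r(r+1)/2$; and to bound $\#\mathcal B_r$ by a \emph{fiber} estimate exploiting that $p^2\mid f_A$ with $\deg p\ge r+1$ forces $A$ to preserve a rigid invariant subspace, cutting the dimension by $r+1$. Together these give $R_n(H)\ll H^{n^2-\min\{n-r(r+1)/2,\,r+1\}}\log H$, and optimising the free parameter $r$ produces exactly $\Delta_n$; the optimum sits at $r\asymp\sqrt{2n}$, so the net saving is only of order $\sqrt n$, consistent with this being the unconditional (hence weaker) bound.

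\textbf{The main obstacle} is manufacturing the two estimates so that their exponents meet at $\Delta_n$. The fiber estimate for $\mathcal B_r$ must be uniform over all $f$ in that stratum — a worst-case bound on $T(f;H)$ is too lossy, since the conjectural $T(f;H)\le H^{n^2-2n+2+o(1)}$ would already be needed (and is unavailable) to do better — so one must instead extract the saving directly from the invariant-subspace structure rather than from a blanket count with a fixed characteristic polynomial. Equally delicate is the polynomial-counting side: a repeated root of degree $s$ drags along its $s$ Galois conjugates, each a repeated root, so the admissible $f$ factor as $p^2g$ with $p$ of degree $s$, and one has to count such factorisations within the skewed box (coefficient of $X^{n-k}$ of size $H^k$) correctly, which is where the triangular number $r(r+1)/2$ enters.

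\textbf{The special cases} $n=2,5,7,8$ would follow by feeding sharper inputs into the same scheme. For $n=2$ the condition $\disc f_A=0$ is the single Diophantine equation $(a_{11}-a_{22})^2=-4a_{12}a_{21}$; fixing the pair $(a_{12},a_{21})$ with $-a_{12}a_{21}$ a perfect square (which happens for $\ll H^{1+o(1)}$ pairs by a divisor/square count) determines $a_{11}-a_{22}$ up to sign and leaves $a_{11}$ free, giving $R_2(H)\le H^{2+o(1)}$. For $n=5,7,8$ the gain over the generic $\Delta_n$ comes from replacing the crude per-stratum bounds by sharper fiber or polynomial counts available in those dimensions, improving the saving to $4,4,5$ respectively; I would isolate the few strata that dominate $\sum_f T(f;H)$ for each such $n$ and estimate them individually.
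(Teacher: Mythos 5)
Your lower bound is essentially the paper's own construction (the paper uses the transposed, upper-triangular family with the repeated value in the top-left $2\times 2$ Jordan-type corner), and your $n=2$ argument, while phrased through the equation $(a_{11}-a_{22})^2=-4a_{12}a_{21}$ rather than through counting matrices with characteristic polynomial $(X-\lambda)^2$, is a correct divisor-bound argument of the same strength. The overall skeleton — stratify by the degree of the repeated factor, bound the low-degree stratum with a saving of $n-r(r+1)/2$ and the high-degree stratum with a saving of $r+1$, optimise $r$ — is also exactly the paper's. But both pivotal estimates are asserted rather than proved, and the mechanisms you sketch would not deliver them. For the low-degree stratum $\mathcal{A}_r$, counting admissible polynomials $f=p^2g$ in the skewed box is not enough: you must still multiply by a fibre bound on $T(f;H)$ uniform in $f$, and no unconditional bound of the needed strength exists (that is precisely the conjecture of Habegger--Ostafe--Shparlinski, as you yourself note for $\mathcal{B}_r$). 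The paper avoids fibres over polynomials altogether: it fixes only the repeated \emph{eigenvalue} $\lambda$ — by Gerschgorin its minimal polynomial of degree $\le r$ has $i$th coefficient $O(H^i)$, giving $O\bigl(H^{r(r+1)/2}\bigr)$ choices — and then bounds $\#\{A\in\cM_n(\Z;H):\det(A-\lambda I_n)=0\}\ll H^{n^2-n}\log H$ \emph{uniformly over complex} $\lambda$, by a column-differencing argument (extending Shparlinski's) that reduces to Katznelson's count of singular integer matrices. This lemma is the engine of the whole proof and is the source of the $\log H$ factor in the theorem, which your write-up never produces.

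For the high-degree stratum $\mathcal{B}_r$, your proposed "rigid invariant subspace cutting the dimension by $r+1$" is a hope, not an argument: over $\Z$ there is no known uniform fibre estimate of this kind, and you correctly diagnose the difficulty without resolving it. The paper's actual device is arithmetic: choose a prime $p\ge 2H+1$, so that reduction mod $p$ is \emph{injective} on $\cM_n(\Z;H)$; over $\F_p$ the characteristic polynomial has the shape $g_p^2h_p$ with $\deg g_p\ge r+1$, so there are $O(p^{n-r-1})$ admissible polynomials mod $p$, and Reiner's theorem supplies the uniform fibre bound $P_{n,p}(f)=O(p^{n^2-n})$ over $\F_p$ — exactly the uniformity that is unavailable over $\Z$ — yielding $O(H^{n^2-r-1})$. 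Finally, your treatment of $n=5,7,8$ ("sharper fibre or polynomial counts available in those dimensions") has no content; the actual mechanism is purely combinatorial: if every irreducible factor of $f=g^2h$ has degree $\ge r+1$, then degree counting forces $n\ge 3(r+1)$, so for $r+1>n/3$ the high-degree stratum is empty and $R_n(H)\le R_{n,r}(H)\ll H^{n^2-n+r(r+1)/2}\log H$, which improves on the generic optimisation exactly when $(r+1)(r+2)<2n$; this is feasible precisely for $(n,r)\in\{(5,1),(7,2),(8,2)\}$, giving the exponents $25-4$, $49-4$ and $64-5$. Without the Katznelson-based lemma, the mod-$p$/Reiner step, and this degree obstruction, your proposal is a correct outline of \emph{what} must be proved but not a proof.
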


We notice that for other small values of $n$ (for which the 
special treatment of Theorem~\ref{thm:MulEigenval} does not apply) 
we have
\[
\Delta_3 = \Delta_4 = 2,\qquad \Delta_6= \Delta_9=3, \qquad \Delta_{10} = 4,
\]
while for large $n$ we have
\[
\lim_{n\to \infty} \Delta_n/\sqrt{2n}  = 1.
\]

We also note that for $n=2$, the lower and upper bounds in Theorem~\ref{thm:MulEigenval} agree up to $H^{o(1)}$, that is, one has
\[
H^{2}\ll N_2(H)\le R_2(H) \le H^{2+o(1)},
\]
confirming~\eqref{eq:conj Rn}. 

The proof of  Theorem~\ref{thm:MulEigenval} uses a different approach than in~\cite{HLM}, and is based on studying possible factorisations
of the characteristic polynomials of matrices and then either using a generalisation of a result from~\cite{Shp} or
using modular reduction modulo an 
appropriately chosen prime $p$. After this we can use known results about the distribution of characteristic polynomials of matrices over finite fields~\cite{Rei}. 

The above approach, which  we use for $d=0$ does not apply for $d\ne 0$. In this case we study the absolute irreducibility 
of the corresponding variety and use the bound of 
Browning, Heath-Brown and Salberger~\cite[Corollary~2]{BrHBSa}
(see also~\cite{Salb}) 
to prove the following estimate. 

\begin{thm}
\label{thm:Discr}
Uniformly over  $d \ne 0$, as $H \to \infty$,  we have 
\[
R_{n}(d,H) \ll   H^{n^2 -  2+o(1)}. 
\] 
\end{thm}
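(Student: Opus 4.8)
The plan is to realize $\cR_n(d,H)$ as the set of integer points of bounded height on an affine variety in $\A^{n^2}$ and then invoke the dimension-growth / affine point-counting bound of Browning, Heath-Brown and Salberger. Concretely, write the entries $a_{ij}$ as $n^2$ coordinates, and let $F(a_{11},\ldots,a_{nn})=\disc(f_A)-d$, where $f_A$ is the characteristic polynomial. Since $\disc(f_A)$ is an integer polynomial in the coefficients of $f_A$, and these coefficients are themselves polynomials in the $a_{ij}$ (up to sign, the elementary symmetric functions of the eigenvalues, i.e. sums of principal minors), $F$ is a polynomial in $\Z[a_{11},\ldots,a_{nn}]$ of degree bounded in terms of $n$ only. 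Thus $\cR_n(d,H)$ is exactly the set of integer points of the hypersurface $\{F=0\}$ lying in the box $|a_{ij}|\le H$.

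The core of the argument is to control the dimension of $\{F=0\}$ and, crucially, to extract an absolutely irreducible component of the expected codimension so that the bound of~\cite[Corollary~2]{BrHBSa} yields $O(H^{\dim -1+o(1)})$ with $\dim=n^2-1$, giving the claimed exponent $n^2-2$. First I would argue that for $d\ne 0$ the hypersurface $\{F=0\}$ has dimension exactly $n^2-1$: it is a proper subvariety since generic matrices have nonzero discriminant, and $F$ is not identically zero, so the hypersurface has pure codimension $1$. The essential point is then \emph{absolute irreducibility}: I would show that the polynomial $F=\disc(f_A)-d$ is absolutely irreducible (or has an absolutely irreducible component carrying almost all the integer points), so that the hypersurface is not a union of lower-degree pieces that would spoil the bound. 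For this I expect to reduce to the known irreducibility of the discriminant as a polynomial in the coefficients of a generic polynomial: the discriminant $\Delta(c_1,\ldots,c_n)$ of a degree-$n$ polynomial is irreducible over $\overline{\Q}$, hence $\Delta-d$ is as well for $d\ne 0$ (the value $d=0$ is special because $\Delta$ itself defines the discriminant locus, which is what makes the $d=0$ case require the separate treatment of Theorem~\ref{thm:MulEigenval}). One then pulls this back along the dominant map sending $A$ to the coefficient vector of $f_A$; since this map is surjective with fibres of dimension $n^2-n$, the preimage of the irreducible hypersurface $\{\Delta=d\}$ remains of the expected dimension, and I would verify it stays absolutely irreducible, or at least that the non-irreducible locus is of strictly smaller dimension and hence contributes a negligible $O(H^{n^2-2})$ via a cruder bound.

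With absolute irreducibility and $\dim=n^2-1$ in hand, the estimate of~\cite[Corollary~2]{BrHBSa} (dimension growth for affine varieties, or its projective counterpart applied after homogenisation) gives that the number of integer points of height at most $H$ on each absolutely irreducible component is $O(H^{n^2-2+o(1)})$, uniformly in the coefficients and hence uniformly in $d$. Summing over the boundedly many components yields $R_n(d,H)\ll H^{n^2-2+o(1)}$, as required.

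The main obstacle I anticipate is the absolute irreducibility step, specifically controlling it \emph{uniformly in $d$}. While $\Delta-d$ being irreducible over $\overline{\Q}$ for each fixed $d\ne0$ is a clean statement, one must ensure the implied constants in the point-counting bound do not degenerate as $d$ varies, and one must handle the pullback under $A\mapsto f_A$ carefully — the preimage could in principle fail to be irreducible even when the base is. I would address this by either passing to the function-field generic point to establish geometric irreducibility of the coefficient hypersurface once and for all, and then using that the affine point count depends only on the degree and dimension (not on $d$) to get uniformity, or, if the pullback's irreducibility is delicate, by working directly with the coefficient variety $\{\Delta=d\}\subset\A^n$ and accounting for the fibre contribution of the map $A\mapsto f_A$ separately.
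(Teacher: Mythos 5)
Your overall route---realising $\cR_{n}(d,H)$ as the integer points in a box on the affine hypersurface $\disc(f_\vX)-d=0$ in $\A^{n^2}$ and invoking the bound of Browning, Heath-Brown and Salberger---is the same as the paper's, but your plan for the irreducibility hypothesis has a genuine gap. The counting statement you need (Lemma~\ref{lem:BH-BS}, i.e.\ \cite[Corollary~2]{BrHBSa} or \cite[Theorem~7.4]{Salb}) requires absolute irreducibility of the \emph{highest form} of $F$, not of $F$ itself. Since $\disc(f_\vX)$ is homogeneous of degree $n(n-1)$ in the matrix entries (the eigenvalues of $tA$ are $t\lambda_i$), the highest form of $\disc(f_\vX)-d$ is $\disc(f_\vX)$ itself --- precisely the $d=0$ object you declared ``special'' and set aside. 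So the crux for $d\ne 0$ is still the absolute irreducibility of the discriminant hypersurface in matrix-entry space, and there is no off-the-shelf statement for it: the classical irreducibility of the discriminant $\Delta(c_1,\ldots,c_n)$ lives in coefficient space, and, as you yourself suspect, its pullback under $A\mapsto f_A$ is not automatically irreducible --- over the discriminant locus the fibres are unions of several conjugacy classes (diagonalisable and non-diagonalisable matrices), so a generic-fibre argument does not control possible extra components or multiplicities there. The paper settles this (Lemma~\ref{lem:abs irred discr}) by a counting argument you do not have: reduce modulo a large split prime, compare the Lang--Weil count $s\,p^{n^2-1}+O(p^{n^2-3/2})$ attached to a putative factorisation with the exact count $(1+o(1))p^{n^2-n}\cdot p^{n-1}$ coming from Reiner's formula (Lemma~\ref{lem:Rei}) and Carlitz's count of non-squarefree monic polynomials, forcing a single component, and then rule out a proper power by an explicit specialisation with a $2\times 2$ antidiagonal block. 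The irreducibility of $\disc(\vX)-d$ for $d\ne 0$ then \emph{follows} from the $d=0$ case by comparing top homogeneous parts of a factorisation --- the reverse of the direction of your reduction, and also the reason uniformity in $d$ is automatic (the leading form and the degree do not depend on $d$).

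Two further concrete problems. First, your fallback --- counting on $\{\Delta=d\}\subset\A^n$ in coefficient space and summing fibre contributions of $A\mapsto f_A$ --- cannot reach $H^{n^2-2}$ unconditionally: per fibre you would need $P_n(H;f)\le H^{n(n-1)/2+o(1)}$, which is exactly the conjectural bound~\eqref{eq:conj Pn} from~\cite{HOS}, while the unconditional fibre bound is only of the shape $O(H^{n^2-n}\log H)$, and the number of admissible $f$ vastly exceeds $H^{n-2}$. Second, Lemma~\ref{lem:BH-BS} requires $\deg F\ge 4$, which fails for $n=2$, where $\disc$ has degree $2$ in the entries; the paper must (and does) treat $n=2$ separately via the identity $\disc(A)=(a_{11}-a_{22})^2+4a_{12}a_{21}$ and the divisor bound, whereas your blanket application is invalid there. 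For $n\ge 3$ you also need the degree lower bound $\deg\disc(f_\vX)\ge n(n-1)\ge 6$, which the paper extracts from Lax's evaluation~\cite{Lax} for symmetric matrices via specialisation; ``degree bounded in terms of $n$'' is not enough, since the hypothesis is a lower bound.
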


We note that one of our auxiliary results, an upper bound of the number 
of matrices with an eigenvalue of degree at most $r$ over $\Q$, see Lemma~\ref{lem:low deg eigenval}, 
 improves a recent result of  Anderson and O'Dorney~\cite{AnOD} in some ranges of $r$, 
 which are important for our argument. 
We also discuss  some other applications of our technical tools, in particular improving a 
result of Assing, Blomer and Nelson~\cite[Corollary~1.7]{AsBlNe}  
 in some ranges of parameters, see Section~\ref{sec:Appl}. 

  \subsection{Some comments and open questions}
  
We note that our approach does not seem to work for $\SL_n(\Z)$ matrices. 
However,  an upper bound 
\begin{equation}
\label{eq:bound Ln}
\#\cL_n\(d,H\)  \ll H^{n^2-n - 1/\(4\rf{(n-1)/2}\(n^2+n\)\)}
\end{equation} 
on the cardinality of 
\[
\cL_n\(d,H\)  = \{A\in \cM_n\(\Z; H\) \cap \SL_n(\Z)  :~\disc(A)=d\}
\]
is  essentially implicit in~\cite[Example~1.7]{GN} (at least for $d=0$) and
is derived in~\cite[Section~3]{GN} from~\cite[Theorem~3.1]{GN}. 
We recall that by~\cite[Example~1.6]{DRS} we have
\[
H^{n^2-n} \ll \# \{A\in \cM_n\(\Z; H\) \cap \SL_n(\Z)\} \ll H^{n^2-n}.
\]
The only missing ingredient in~\cite{GN}  is a justification of the fact the affine variety 
defined by 
\[
\det \vX = 1, \quad \disc(\vX) = d, \qquad \qquad \vX \in \C^{n\times n},
\]
is of dimension $n^2-2$.  To see that we use the multilinear equation $\det \vX = 1$ 
to express $x_{11}$ 
via the remaining $n^2-1$ variables and substitute this in $ \disc(\vX) = d$. This leads to
a polynomial equation in $n^2-1$ variables.
This equation is non-trivial since 
otherwise all $ \SL_n(\C)$ matrices have the same discriminant $d$, which is false.
Indeed,  the identity matrix has a vanishing discriminant, while the 
diagonal matrix with roots of unity $\exp(2\pi i  \nu/n)$, $\nu =1, \ldots, n$, on the 
main diagonal has a non-zero discriminant. 

It is certainly very interesting to investigate whether the methods 
of~\cite{DRS, GNY} apply to estimating $R_n(H)$.

Similarly to our conjectured bound~\eqref{eq:conj Rn}, one can expect 
that~\eqref{eq:bound Ln} can be improved. However before making any conjectures
we need to understand the number of monic polynomials  $f$ of degree $n$ over $\Z$, 
which are similar to
those in  Remark~\ref{rem:conditional} below, but have the constant coefficient
$f(0) = 1$.  

Another interesting direction, is obtaining analogues of our results for some 
other special classes of matrices, such as symmetric matrices.  It seems plausible 
that our techniques apply to this case as well.

\section{Preliminaries on  characteristic polynomial, determinants and discrimiants} 

  \subsection{Counting matrices with a given characteristic polynomial}

For a given polynomial $f \in \Z[X]$ let $P_n(H;f)$ denote the number 
of matrices  $A\in \cM_n\(\Z; H\)$ with a   characteristic polynomial $f$.

Some upper bounds on $P_n(H;f)$ can be found in~\cite{HOS}. 
Here we give a short 
proof in the case of $n=2$, which we need here.

\begin{lemma}
\label{lem:char poly}
Uniformly over polynomials  $f\in \Z[X]$ of degree $\deg f = 2$ we have 
\[
P_{2} (H;f) \le  H^{1+o(1)}, \qquad  H \to \infty.
\] 
\end{lemma}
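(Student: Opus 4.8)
The plan is to use the explicit relationship between a $2\times 2$ matrix and the coefficients of its characteristic polynomial, which reduces the count to a divisor estimate. Writing
\[
A = \begin{pmatrix} a & b \\ c & d \end{pmatrix}, \qquad |a|,|b|,|c|,|d| \le H,
\]
its characteristic polynomial is $f_A(X) = X^2 - (a+d)X + (ad-bc)$. Hence fixing $f = X^2 - sX + m$ amounts to imposing the two conditions $a+d = s$ and $ad-bc = m$. Since $A \in \cM_2(\Z;H)$, I may assume $|s| \le 2H$ and $|m| \le 2H^2$, as otherwise $P_2(H;f) = 0$ and there is nothing to prove.

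First I would parametrise by the diagonal. The equation $a+d = s$ leaves at most $2H+1$ admissible values of $a$, each of which determines $d = s-a$. For each such pair, the remaining condition reads $bc = ad - m =: k$, where $|k| \le |ad| + |m| \le 3H^2$.

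The main term comes from the generic case $k \ne 0$. Here the number of ordered factorisations $k = bc$ with $b,c \in \Z$ is at most $2\tau(|k|)$, where $\tau$ denotes the divisor function; by the standard divisor bound $\tau(|k|) \le |k|^{o(1)} \le H^{o(1)}$. Summing over the $O(H)$ choices of $a$ then contributes $H^{1+o(1)}$ matrices.

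The hard part, and the only place requiring genuine care, is the degenerate case $k = 0$, since there the divisor bound is vacuous. Here $ad = m$ together with $a+d = s$ means that $a$ and $d$ are precisely the two roots of $f$, so there are at most two admissible pairs $(a,d)$; meanwhile $bc = 0$ forces $b = 0$ or $c = 0$, giving $O(H)$ pairs $(b,c)$. Thus this case contributes only $O(H)$ matrices, and I would stress that it is exactly the constraint $a+d=s$ that pins the diagonal down to the eigenvalues of $f$ and thereby tames the apparent blow-up. Combining the two cases yields $P_2(H;f) \le H^{1+o(1)} + O(H) = H^{1+o(1)}$, as claimed.
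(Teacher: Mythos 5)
Your proposal is correct and follows essentially the same route as the paper's own proof: fix the trace to parametrise by $a$ (which determines $d$), bound the generic case $bc = a(s-a)-m \ne 0$ by the divisor function, and note that the degenerate case $bc=0$ pins $a$ down to at most two values (the roots of $f$) with $O(H)$ choices of $(b,c)$. No gaps; the argument matches the paper's in both structure and detail.
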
 

\begin{proof} 
Let 
\[ 
A  = \begin{pmatrix}a & b\\c & d \end{pmatrix}.
\]
If the characteristic polynomial of $A$ is $f(X) = X^2 - u X + v$, this fixes the trace and the determinant of $A$.
Thus we have 
\[
a+d = u \mand ad - bc = v.
\]
Therefore $bc = a(u-a) -v$, and when the triple $(a,b,c)$ is fixed, $d$ is uniquely defined. 

There are at most $2$ values of $a$ with $a(u-a) - v= 0$, in which case either $b=0$ 
of $c=0$ (or both). So the are at most $4H$ such matrices.

Otherwise for each $a$, the classical bound on the 
divisor function,   see~\cite[Equation~(1.81)]{IwKow}, implies that 
there are at most $H^{o(1)}$ choices for $(b,c)$. 
 So the are at most $H^{1+o(1)}$ such matrices.
\end{proof} 

\begin{rem}
\label{rem:conditional}
It has been conjectured in~\cite[Conjecture~1.1]{HOS} that 
\begin{equation}
\label{eq:conj Pn}
P_{n}(H;f) \le H^{n(n-1)/2+o(1)},
\end{equation}
as $H \to \infty$, uniformly with respect to $f$. 
Therefore, since $R_n(H)$ counts matrices in $\cM_n(\Z;H)$ with a multiple eigenvalue,
 it is natural to expect that matrices with only one 
double eigenvalue dominate the count, and thus their characteristic 
polynomial is of the form $f(X) = (X-a)^2 g(X)$ where
\[ a \in \Z \quad \text{and} \quad
g(X) =  X^{n-2} + b_{1} X^{n-3} + \dots + b_1 X + b_{n-2} \in \Z[X],
\]
with $a \ll H$ and $b_i \ll   H^i$, $i=1, \ldots, n-2$. Therefore,  
there are $O\(H^{1+(n-1)(n-2)/2}\)$ such polynomials $f$, 
and applying the bound~\eqref{eq:conj Pn}, we obtain
\[
R_n(H)\le H^{1+(n-1)(n-2)/2} \cdot H^{n(n-1)/2+o(1)}=H^{n^2 -2n  +2+o(1)}.
\] 
\end{rem}

For a prime $p$, 
we also recall a result  of Reiner~\cite[Theorem~2]{Rei} that gives an explicit formula on the number $P_{n,p}(f) $ of matrices with a given characteristic polynomial over the field $\F_p$ of $p$ elements.
However, for our purposes, we only need the following asymptotical formula for $P_{n,p}(f)$.

\begin{lemma}\label{lem:Rei}  For any monic polynomial $f\in \F_p[X]$ of degree $n$ we have 
\[
P_{n,p}(f) = \(1 + o(1)\) p^{n^2-n},  \qquad \text{as}\ p \to \infty.
\]
\end{lemma}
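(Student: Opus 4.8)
The plan is to prove the asymptotic formula
\[
P_{n,p}(f) = (1 + o(1)) p^{n^2-n}, \qquad p \to \infty,
\]
for every monic polynomial $f \in \F_p[X]$ of degree $n$. The key observation is that, over a field, the number of matrices with a given characteristic polynomial depends only on the \emph{factorisation type} of $f$ and is governed by the sizes of the conjugacy classes (or rather, the union of conjugacy classes) realising that characteristic polynomial. Since $\GL_n(\F_p)$ acts by conjugation and two matrices share a characteristic polynomial precisely when they have the same invariant factors up to the freedom allowed by $f$, the count $P_{n,p}(f)$ is a sum over all possible rational canonical forms with characteristic polynomial $f$ of the orbit sizes $\#\GL_n(\F_p)/C(A)$, where $C(A)$ is the centraliser of a representative $A$.

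The first step is to recall that $\#\GL_n(\F_p) = p^{n^2}(1 + o(1))$, since $\#\GL_n(\F_p) = \prod_{i=0}^{n-1}(p^n - p^i) = p^{n^2}\prod_{i=1}^{n}(1 - p^{-i})$, and the product tends to a nonzero constant bounded away from $0$ and $1$; in fact one has the cleaner bound $\#\GL_n(\F_p) = p^{n^2}(1+O(1/p))$. Next I would invoke Reiner's explicit formula~\cite{Rei} for $P_{n,p}(f)$: it expresses the count as a product over the distinct irreducible factors of $f$, each contributing a factor determined by the partition recording the multiplicities in $f$ and the degree of the irreducible factor. The dominant term arises from the centraliser of the \emph{regular} (cyclic) elements, whose centraliser in $\GL_n(\F_p)$ has order $p^{n}(1+o(1))$, so that the corresponding orbit has size $p^{n^2}/p^{n}(1+o(1)) = p^{n^2-n}(1+o(1))$.

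The heart of the argument is thus to verify that this leading term dominates uniformly in the factorisation type of $f$. Concretely, I would show that for any monic $f$ of degree $n$ the count satisfies $P_{n,p}(f) = p^{n^2-n}(1+o(1))$ by checking that: (i) every similarity class with characteristic polynomial $f$ has centraliser of order at least $c\,p^{n}$ for some constant $c=c(n)>0$, so every orbit has size $O(p^{n^2-n})$; (ii) there are only boundedly many (depending on $n$) such similarity classes; and (iii) at least one of them---the cyclic class, which always exists since one may take the companion matrix of $f$---has centraliser of order exactly $p^n(1+o(1))$, contributing the full main term $p^{n^2-n}(1+o(1))$. Combining (i)--(iii), the total is $p^{n^2-n}(1+o(1))$ as claimed. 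The main technical point to control is the error term uniformly over all $f$, but since $n$ is fixed the number of factorisation types is bounded and each contributes at most $p^{n^2-n}(1+O(1/p))$, so uniformity is automatic.

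The main obstacle I anticipate is \emph{not} the cyclic case---that is straightforward from the companion matrix and the known size of the centraliser of a regular element---but rather confirming that non-cyclic similarity classes (which occur precisely when $f$ has repeated irreducible factors, allowing several possible rational canonical forms) contribute only lower-order terms, i.e.\ that their centralisers are genuinely larger, of order $p^{n+\delta}$ with $\delta \ge 1$. This requires a uniform lower bound on centraliser dimensions for non-regular conjugacy classes. The cleanest route is simply to cite Reiner's formula~\cite{Rei} directly, extract the leading $p$-power, and observe that every non-leading contribution carries at least one extra factor of $p^{-1}$; since we only need the asymptotic and not the exact formula, this reduces the proof to reading off the degree in $p$ of Reiner's expression and confirming it equals $n^2-n$ with all correction terms of relative size $O(1/p)$.
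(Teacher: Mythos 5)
Your proposal is correct, and it is worth noting that the paper itself offers no proof at all: it simply states the lemma as an immediate consequence of Reiner's exact formula~\cite{Rei}, which is precisely the fallback route you settle on in your final paragraph, so in that respect you and the paper coincide. What you add, however, is a genuinely self-contained alternative via the orbit--stabiliser theorem: the matrices with characteristic polynomial $f$ split into boundedly many (in terms of $n$ only) similarity classes, indexed by a partition of each multiplicity $e_i$ of each irreducible factor of $f$; the cyclic class always exists (companion matrix) and has centraliser $\(\F_p[X]/(f)\)^\times$ of order $p^n\prod_{q \mid f}\(1-p^{-\deg q}\) = p^n\(1+O(1/p)\)$, so it contributes $\#\GL_n(\F_p)/\#C(A) = p^{n^2-n}\(1+O(1/p)\)$, uniformly in $f$. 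The one point you correctly flag as needing substantiation --- that non-cyclic classes have strictly larger centralisers --- is standard and can be closed without appealing to Reiner: by Frobenius's formula, if $A$ has invariant factors $g_1 \mid g_2 \mid \cdots \mid g_r$ then the centraliser \emph{algebra} has dimension $\sum_{i=1}^{r}(2i-1)\deg g_{r-i+1} \ge n$, with equality exactly when $r=1$ (the cyclic case), and in fact dimension at least $n+2$ otherwise; since the unit group of a $d$-dimensional $\F_p$-algebra has order $\gg_d p^d$, every non-cyclic class has size $O\(p^{n^2-n-2}\)$, so the boundedly many non-cyclic classes are absorbed into the error term and uniformity over $f$ is automatic, as you say. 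Thus your sketch, once this centraliser-dimension input is made explicit, yields a complete proof that is more elementary than quoting the full strength of Reiner's formula, at the cost of invoking the rational canonical form machinery; the paper's citation is shorter but opaque.
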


\subsection{Determinants of shifted matrices}
   
The following result  can be obtained
by a slight extension of  the argument in the 
proof of~\cite[Theorem~4]{Shp}, which is given for an integral $n\times n$ matrix $K$.
However, the proof, which is based on eliminating $K$, never appeals to its integrality.  
In particular, we have. 

\begin{lemma}
\label{lem:shift sing matr}
Uniformly over  $n\times n$ matrices $K \in \C^{n\times n}$  and complex $\alpha \in \C$, for 
\[
J_n(K; H, \alpha)= \#\left \{A\in \cM_n\(\Z; H\):~\det(A-K) = \alpha \right \}  
\]
we have
\[
J_n(K; H, \alpha)  \ll H^{n^2-n} \log H. 
\]
\end{lemma}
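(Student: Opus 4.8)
The plan is to rerun the cofactor-expansion argument of~\cite[Theorem~4]{Shp} and to check that integrality of $K$ is never used. First I would expand $\det(A-K)$ along the first column. Writing $\va=(a_{11},\dots,a_{n1})$ for the first column of $A$, letting $\vec{k}$ be the first column of $K$, and letting $\vec{C}=\vec{C}(A)\in\C^{n}$ be the vector of signed $(n-1)\times(n-1)$ minors obtained by deleting the first column of $A-K$ (so that $\vec{C}$ depends only on columns $2,\dots,n$ of $A$), multilinearity of the determinant gives $\det(A-K)=\langle\va,\vec{C}\rangle-\langle\vec{k},\vec{C}\rangle$. Hence the defining equation reduces to the single affine-linear condition $\langle\va,\vec{C}\rangle=\beta$ with $\beta:=\alpha+\langle\vec{k},\vec{C}\rangle$, where both $\vec{C}$ and $\beta$ depend only on the last $n-1$ columns. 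This is precisely the step in which $K$ is ``eliminated'': it survives only as a translation of the target $\beta$ and through the complex entries of $\vec{C}$, and neither of these affects the counting that follows. The count then factorises as a sum, over the $\ll H^{n(n-1)}=H^{n^2-n}$ choices of columns $2,\dots,n$, of the number of admissible first columns $\va\in\Z^{n}$ with $\|\va\|_\infty\le H$.

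Second, I would estimate this inner count by geometry of numbers. The admissible $\va$ are the integer points of the box $[-H,H]^{n}$ on the affine hyperplane $\langle\va,\vec{C}\rangle=\beta$, and solving for the coordinate carrying the largest coefficient shows their number is $\ll 1+H^{n-1}g/\|\vec{C}\|_\infty$, where, in the integral-$K$ case, $g$ is the greatest common divisor of the minors. Summing the leading term $1$ over all $\ll H^{n^2-n}$ choices of the remaining columns already produces the claimed main term $H^{n^2-n}$; what remains is to show that the tail $H^{n-1}\sum g/\|\vec{C}\|_\infty$, with the sum taken over the $n\times(n-1)$ integral data, is $\ll H^{n^2-n}\log H$, that is, $\sum g/\|\vec{C}\|_\infty\ll H^{(n-1)^2}\log H$. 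The point is that a generic $n\times(n-1)$ integer matrix of height $H$ has maximal minors of size $\asymp H^{n-1}$ and trivial minor-gcd, so each such term is typically $\asymp H^{-(n-1)}$ and the $\asymp H^{n^2-n}$ generic configurations contribute only $\asymp H^{(n-1)^2}$.

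Third, I would extract the extra $\log H$ by a dyadic decomposition in $V\asymp\|\vec{C}\|_\infty$ together with a summation over $g$, the key input being a bound of the expected shape $\#\{\text{columns }2,\dots,n:\ \|\vec{C}\|_\infty\le V\}\ll H^{n^2-n}\,(V/H^{n-1})\,H^{o(1)}$, which reflects that forcing the maximal minors to be small pushes the $n\times(n-1)$ matrix towards the codimension-two locus of rank-deficient matrices. I expect this degenerate bookkeeping to be the main obstacle: one has to count integer matrices all of whose maximal minors are small while tracking the minor-gcd $g$, whose divisibility constraints govern solvability, and it is precisely the summation over $g$ of terms of size $\asymp H/g$ that yields the single factor $\log H$, exactly as in the case $n=2$. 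The crucial observation for the present statement is that none of these steps uses integrality of $K$: passing from an integral to a complex $K$ only shifts the hyperplane $\langle\va,\vec{C}\rangle=\beta$ to a complex affine translate and replaces the minor vector by its complex analogue, while the relevant content and size estimates are driven by the rational direction and the magnitude of the minors; moreover, when $\vec{C}$ is not even proportional to a rational vector the fibre contains $O(1)$ points and the bound only improves. Hence the estimate $J_n(K;H,\alpha)\ll H^{n^2-n}\log H$ holds verbatim for complex $K$.
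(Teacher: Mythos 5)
There is a genuine gap: your outline defers exactly the hard part of the proof. After the (correct) observation that $K$ enters the column expansion only through a shift of the target value and through the minor vector $\vec{C}$, all of the difficulty sits in the inner count, and two things go wrong there. First, your fibre bound $\ll 1 + H^{n-1}g/\|\vec{C}\|_\infty$ is false as stated: the integer points of the slice $\langle\va,\vec{C}\rangle=\beta$ form a coset of a rank-$(n-1)$ lattice, and for skew directions (e.g.\ $\vec{C}=(N,1,0,\ldots,0)$ with $N>H^{n}$, so $g=1$) the slice carries $\asymp H^{n-2}$ points while your bound predicts $O(1)$; a correct bound must involve all successive minima, and the naive correction $H^{n-2}+H^{n-1}g/\|\vec{C}\|_\infty$, summed over the $\asymp H^{n^2-n}$ choices of the remaining columns, already produces $H^{n^2-2}$, far above the target for $n\ge 3$. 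So the dyadic bookkeeping has to track more data than the pair $\(\|\vec{C}\|_\infty, g\)$. Second, and decisively, the bound you label ``of the expected shape'' --- on the number of column configurations with $\|\vec{C}\|_\infty\le V$ --- is precisely the theorem you would need to prove, and you only assert it: it is essentially equivalent to Katznelson's count of singular integer matrices, whose answer is genuinely $\asymp H^{n^2-n}\log H$, so the degenerate locus really does contribute and cannot be waved away as ``generic plus codimension-two''. (A smaller error: your claim that a complex $\vec{C}$ not proportional to a rational vector forces $O(1)$ points per fibre is wrong; splitting into real and imaginary parts confines the integer points to a rational affine subspace of dimension up to $n-2$, hence $O\(H^{n-2}\)$ points --- harmless for the final bound, but symptomatic of the unfinished complex-$K$ bookkeeping.)

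The paper avoids all of this and never counts points on hyperplane slices. It eliminates $\alpha$ and $K$ by differencing: if $A=\(R\vert\va\)$ and $B=\(R\vert\vec{b}\)$ share their first $n-1$ columns and $\det(A-K)=\det(B-K)$, then $C=\(R\vert\va-\vec{b}\)$ satisfies $\det(C-K_1)=0$, where $K_1$ is $K$ with its last column replaced by zero (expand along the last column). Fixing one solution per choice of $R$ and differencing against it gives $J_n(K;H,\alpha)\le J_n(K_1;2H,0)$, and iterating this over all $n$ columns yields $J_n(K;H,\alpha)\le J_n(O_n;2^nH,0)$, i.e.\ the number of singular integer matrices of height $2^nH$, which is $\ll H^{n^2-n}\log H$ by Katznelson~\cite{Kat}. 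This argument works verbatim for complex $K$ and $\alpha$ and confines the entire degenerate-locus analysis to the cited black box; to complete your route you would in effect have to reprove~\cite{Kat}, whereas the efficient fix is to difference first, as the paper does.
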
 

\begin{proof} Assume $A,B \in\cM_n\(\Z; H\)$ are obtained from 
 an $n\times (n-1)$-matrix
$R$ by  augmenting it by two vectors $\vec{a}, \vec{b} \in \Z^n$, 
respectively. That is,
\[
A = \(R\vert \vec{a}\) \qquad \text{and}\qquad 
B = \(R\vert \vec{b}\).
\]
If $\det (A -K)= \det (B-K)$ then putting 
\[
 \vec{c}= \vec{a}-\vec{b} \qquad \text{and}\qquad 
C = \(R\vert   \vec{c}\),
\]
we deduce that $\det (C-K_1)  =0$, where the matrix $K_1$ is obtained from $K$ by replacing its $n$th column
by a  zero vector. Indeed, to see  this,  it is
enough to expand $C$ with respect to the last column.

Let 
\[
K = \(\kappa_{ij} \)_{i,j=1}^n .
\]
Therefore, for any $R$ and $\alpha$, we have
\begin{equation}
\label{eq:Reduction 1}
\begin{split}
 \# \{\vec{a} = (a_1&, \ldots, a_n) \in \Z^n:  \\
 &   \det\( \(R\vert \vec{a}\)-K\)= \alpha, \
|a_i| < H, \ i =1, \ldots, n\}\\
&\qquad \le  \#  \{\vec{c} =  (c_1, \ldots,c_n) \in \Z^n: \\
& \qquad \qquad   \det \(C - K_1\)=0, \
 |c_i| < 2H, \ i =1, \ldots, n\}.
\end{split}
\end{equation}
Indeed, if $\vec{a}_1, \ldots, \vec{a}_J$ is the list of 
elements from the first set in~\eqref{eq:Reduction 1} then
the vectors $\vec{c}_j = \vec{a}_1- \vec{a}_j$, $j=1, \ldots, J$, 
are distinct and belong to the second set in~\eqref{eq:Reduction 1}.

Summing~\eqref{eq:Reduction 1}
over all  integral  matrices 
$R = (r_{ij})_{i,j=1}^{n,n-1}$ such 
that
\[
|r_{ij}| \le H, \qquad 
1\le i \le n, \ 1\le j \le n-1,
\]
and observing that   $C    \in\cM_n\(\Z; 2H\)$, 
we obtain
\begin{equation}
\label{eq:Reduction 2}
J_n(K; H, \alpha) \le  J_n(K_1; 2H, 0).
\end{equation}

Repeating the same argument with respect to the $(n-1)$th column 
of the matrix $K_1$, we obtain from~\eqref{eq:Reduction 2}
\[
J_n(K; H, \alpha) \le J_{n}(K_2;4H,0), 
\]
where now $K_2$ is obtained from $K$ by replacing its $(n-1)$th and $n$th columns
by a  zero vector. 

Continuing the same procedure, after $n$ steps we arrive to the
inequality
\begin{equation}
\label{eq:Reduction n}
J_n(K; H, \alpha) \le  J_n(K_n; 2^nH, 0)
\end{equation} 
where $K_n = O_n$ is the zero matrix.  By a result of  Katznelson~\cite[Theorem~1]{Kat} we have 
\[
J_n(K_n; 2^nH, 0)   \ll H^{n^2-n} \log H, 
\]
which together with~\eqref{eq:Reduction n} concludes the proof. 
\end{proof}

\subsection{Matrices with eigenalues of low degree}
Let $R_{n,r}(H)$ be the number  of  matrices $\cM_n\(\Z; H\)$ 
whose characteristic polynomial has  an irreducible factor of degree $r \le  n/2$.  
Anderson and  O'Dorney~\cite[Theorem~1.4]{AnOD} have shown that 
\[
R_{n,r}(H) \ll H^{n^2 - (n-r)/(n+r-1)}.
\]  
However for our purpose we need the following bound which is better for small values of $r$. 

\begin{lemma}
\label{lem:low deg eigenval} For any positive integer $r\le n/2$ we have 
\[
R_{n,r}(H) \ll  H^{n^2-n+r(r+1)/2} \log H. 
\]
\end{lemma}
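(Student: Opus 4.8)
The plan is to count matrices $A\in\cM_n(\Z;H)$ whose characteristic polynomial $f_A$ factors as $f_A = g h$, where $g\in\Z[X]$ is an irreducible factor of degree $r$ and $h\in\Z[X]$ is the complementary monic factor of degree $n-r$. The coefficients of $f_A$ are bounded polynomially: since the entries of $A$ are bounded by $H$, the $i$-th coefficient of $f_A$ is $O(H^i)$. The first step is to bound the number of admissible factors $g$ of degree $r$. Because $g$ is a factor of $f_A$, its roots are among the eigenvalues of $A$, which have absolute value $O(H)$; hence the coefficients of the monic $g$ are $O(H^j)$ for the degree-$j$ coefficient, giving at most $O\!\left(H^{1+2+\dots+r}\right) = O\!\left(H^{r(r+1)/2}\right)$ choices for $g$.

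Having fixed such a $g$, the key idea is to eliminate the eigenvalue structure. If $\alpha$ is a root of $g$ in the splitting field, then $\det(A - \alpha I) = f_A(\alpha) = 0$, so $A-\alpha I$ is singular. I would like to invoke Lemma~\ref{lem:shift sing matr} with $K = \alpha I$ and $\alpha$ (the scalar parameter) equal to $0$: this bounds the number of $A\in\cM_n(\Z;H)$ with $\det(A-\alpha I)=0$ by $O(H^{n^2-n}\log H)$, \emph{uniformly} in the complex matrix $K$. The uniformity in $K$ is exactly what makes the lemma applicable here, since $\alpha$ is an algebraic number depending on $g$ rather than a fixed rational. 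Thus for each fixed $g$ (equivalently, each fixed root $\alpha$), the number of matrices having $\alpha$ as an eigenvalue is at most $O(H^{n^2-n}\log H)$.

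Combining the two counts, the total is at most the number of choices for $g$ times the per-$g$ bound, namely
\[
R_{n,r}(H) \ll H^{r(r+1)/2}\cdot H^{n^2-n}\log H = H^{n^2-n+r(r+1)/2}\log H,
\]
which is the claimed estimate. The main obstacle I anticipate is the legitimacy of applying Lemma~\ref{lem:shift sing matr} with a complex (indeed algebraic, non-rational) scalar $\alpha$: one must be sure that the bound genuinely holds uniformly over all $K\in\C^{n\times n}$ and that no integrality of $K$ was secretly used — precisely the point the excerpt flags in the remark preceding Lemma~\ref{lem:shift sing matr}. A secondary subtlety is to ensure that each matrix $A$ is not overcounted too severely when it has several low-degree factors; but since we only seek an upper bound and sum over the $O(H^{r(r+1)/2})$ possible factors $g$, summing multiplicities only inflates the constant, so this causes no loss in the exponent.
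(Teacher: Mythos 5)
Your proposal is correct and follows essentially the same route as the paper's own proof: the paper likewise uses the Gerschgorin bound to show there are only $O\(H^{r(r+1)/2}\)$ possible minimal polynomials (equivalently, eigenvalues $\lambda$) of degree at most $r$, and then applies Lemma~\ref{lem:shift sing matr} with the complex matrix $K=\lambda I_n$ and scalar parameter $0$, multiplying the two counts. Your flagged concern about using a complex (algebraic, non-rational) shift is precisely the point the paper settles in the remark before Lemma~\ref{lem:shift sing matr}, namely that the proof of~\cite[Theorem~4]{Shp} never appeals to the integrality of $K$.
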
 

\begin{proof}
By the famous result of Gerschgorin~\cite[Satz~II]{Gersch},  all eigenvalues of a matrix  $A\in \cM_n\(\Z; H\)$ are of 
size $O(H)$. 
Hence, if $\lambda$ is an eigenvalue of $A\in \cM_n\(\Z; H\)$ of degree $d = [\Q(\lambda):\Q] $
then, since its conjugates are of  order $O(H)$, the minimal polynomial $F$ of $\lambda$ over $\Q$,
\[
F(X) = X^d + c_{1} X^{d-1} + \ldots +  c_{d-1} X + c_d \in \Z[X]
\]
has coefficients satisfying 
\[
c_{i} = O\(H^i\), \qquad i =1, \ldots, d.
\]
Thus  there are  at most $O\(H^{d(d+1)/2}\)$ possibilities for $F$. 

We now fix some positive  integer $r\le n/2$. We see that the set $\cL$ 
of all possible eigenvalues of degree  $[\Q(\lambda):\Q] \le r $ of matrices $A\in \cM_n\(\Z; H\)$ is of size
\begin{equation}
\label{eq:Set L}
\# \cL = O\(H^{r(r+1)/2}\). 
\end{equation}
For each $\lambda \in \cL$, since 
\[
\det(A-\lambda I_n ) = 0, 
\]
where $I_n$ is the $n\times n$ identity matrix,    we conclude by Lemma~\ref{lem:shift sing matr}
that there are $O\(H^{n^2-n} \log H\)$ possible matrices $A$. 

Hence, recalling~\eqref{eq:Set L}, we see that the total number of matrices $A\in \cM_n\(\Z; H\)$, which have an eigenvalue 
$\lambda$  of degree $[\Q(\lambda):\Q] \le r $ over $\Q$ is 
\[
R_{n,r}(H)  \ll \# \cL  \cdot H^{n^2-n} \log H\ll H^{n^2-n+r(r+1)/2} \log H, 
\]
which concludes the proof. 
\end{proof}

\subsection{Absolute irreducibility of the discriminant of a generic matrix} 
We treat 
\[
\disc(\vX) \in \Z\left[\vX\right], \qquad \vX = \(X_{ij}\)_{1 \le i,j \le n}, 
\]
 as a polynomial over $\Z$ in $n^2$ variables. 

\begin{lemma}
\label{lem:abs irred discr}
The polynomial $\disc(\vX)$ is absolutely irreducible over $\Q$.
\end{lemma}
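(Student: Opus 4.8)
The plan is to show that $\disc(\vX)-d$ is absolutely irreducible by first establishing absolute irreducibility of the discriminant $\disc(\vX)$ itself as a polynomial in the $n^2$ entries, and then using a general principle that adding a nonzero constant to an irreducible polynomial that is not a pure power of a lower-degree polynomial preserves irreducibility. More precisely, I would argue as follows. The discriminant of the characteristic polynomial $f_A(X)$ factors, over the algebraic closure, as
\[
\disc(\vX) = \prod_{1 \le i < j \le n} (\lambda_i - \lambda_j)^2,
\]
where $\lambda_1, \ldots, \lambda_n$ are the eigenvalues. The key observation is that $\disc(\vX)$ is, up to sign, the resultant $\Res(f_A, f_A')$ and hence a genuine polynomial in the entries $X_{ij}$; the hard part is understanding its irreducible factorisation.

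The cleanest route I would take is to pass to a convenient parametrisation. Restrict $\vX$ to the subspace of matrices in rational canonical / companion form together with a conjugating matrix, or alternatively use the classical fact that on the open dense locus of regular semisimple matrices the eigenvalues are well-defined holomorphic functions, and study the vanishing locus $\{\disc(\vX)=0\}$ geometrically. This locus is exactly the set of matrices with a repeated eigenvalue. I would first argue that $\disc(\vX)$ is \emph{irreducible} over $\overline{\Q}$ by showing its zero set $\mathcal D = \{\disc(\vX)=0\}$ is an irreducible hypersurface and that $\disc(\vX)$ is reduced (squarefree) as a polynomial. Irreducibility of $\mathcal D$ can be seen because $\mathcal D$ is the image of an irreducible variety: the matrices with a prescribed repeated eigenvalue form a fibre bundle over the eigenvalue parameter, and the total incidence variety $\{(A,\lambda): \det(A-\lambda I)=0,\ \det(A-\lambda I) \text{ has a double root at } \lambda\}$ is irreducible and dominates $\mathcal D$. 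That $\disc(\vX)$ is squarefree (not a proper power) follows by computing it along a one-parameter family, e.g. a diagonal family $\diag(t,0,\ldots,0)$, where the discriminant vanishes to order exactly one in $t$, so the defining polynomial appears to the first power.

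With $\disc(\vX)$ shown to be absolutely irreducible, I would invoke the following standard lemma: if $g \in \overline{\Q}[\vX]$ is absolutely irreducible and $g$ is not a polynomial in a single linear (or lower-complexity) form in a way that makes $g-d$ factor, then $g - d$ is absolutely irreducible for every $d$. The cleanest version: the homogeneous leading form of $\disc(\vX)$ — which is the top-degree part, namely $\prod_{i<j}(\lambda_i-\lambda_j)^2$ expressed in the entries — must itself be irreducible or at least not shared as a factor by $\disc(\vX)-d$ for any $d$. Concretely, if $\disc(\vX)-d = P\cdot Q$ nontrivially, then comparing top-degree homogeneous parts gives $\disc^{\mathrm{top}}(\vX) = P^{\mathrm{top}} Q^{\mathrm{top}}$, and the constant $-d$ forces both $P,Q$ to have positive degree with matching structure; one derives a contradiction with the irreducibility of $\disc(\vX)$ by noting that a factorisation of $\disc(\vX)-d$ would, upon specialising $d \to 0$ or examining the singular locus, induce a factorisation of $\disc(\vX)$.

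The main obstacle, and the step I would spend the most care on, is establishing the absolute irreducibility of $\disc(\vX)$ itself and controlling its top-degree form, since the discriminant is a high-degree polynomial in $n^2$ variables whose factorisation structure is not obvious from the entries directly — the symmetric-function description in terms of $\lambda_i-\lambda_j$ lives on a cover, and descending irreducibility back to the entry variables $X_{ij}$ requires the geometric argument above (irreducibility of the incidence variety plus a dominance/fibre-dimension count). I would expect to verify the squarefreeness and the leading-form analysis by an explicit specialisation to a small, well-chosen family of matrices (companion matrices are ideal, since there $\disc(\vX)$ becomes the classical discriminant of a generic monic polynomial, which is known to be irreducible), thereby reducing the matrix statement to the already-classical irreducibility of the discriminant of the generic univariate polynomial.
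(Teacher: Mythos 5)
Your overall architecture is sound and your $d\neq 0$ reduction is exactly the paper's: $\disc(\vX)$ is homogeneous of degree $n(n-1)$ in the entries, so it \emph{is} the leading form of $\disc(\vX)-d$, and a nontrivial factorisation $\disc(\vX)-d=PQ$ forces $\disc(\vX)=P^{\mathrm{top}}Q^{\mathrm{top}}$ with both factors of positive degree, contradicting absolute irreducibility of $\disc(\vX)$. (Your side remark about ``specialising $d\to 0$'' is vacuous --- $d$ is a fixed constant and a factorisation at one value of $d$ does not deform to another --- but the leading-form comparison you state is already complete.) The genuine gaps are both in your proof that $\disc(\vX)$ itself is absolutely irreducible. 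First, your squarefreeness test is wrong: along the family $\diag(t,0,\ldots,0)$ the discriminant is \emph{identically zero} for $n\ge 3$ (the eigenvalue $0$ is already repeated), and equals $t^2$ for $n=2$; worse, along \emph{any} family of diagonal matrices the discriminant is the square of a Vandermonde determinant, hence a perfect square, so no diagonal specialisation can ever rule out $\disc=F^2$. This is precisely why the paper's specialisation $\diag(J,2,\ldots,n-1)$ includes a non-diagonal $2\times 2$ block $J$ with independent off-diagonal variables: the restricted discriminant then acquires the factor $\disc(g)=4X_{1,2}X_{2,1}$, which is squarefree and coprime to the resultant factor, ruling out proper powers. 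Your parenthetical fallback --- restricting to companion matrices, where $\disc(\vX)$ becomes the discriminant of the generic monic polynomial, which is classically irreducible --- would repair this step (if $\disc=F^k$, the restriction gives the generic discriminant as a $k$-th power, forcing $k=1$), but it is the diagonal family, not the companion one, that you actually propose to compute with.

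Second, the heart of your argument --- that the hypersurface $\{\disc(\vX)=0\}$ is irreducible because an incidence variety $\{(A,\lambda):\ \lambda \text{ a double root of } f_A\}$ is irreducible and dominates it --- is asserted, not proved, and it is the actual content of the lemma. Making it rigorous requires real work: e.g., an orbit-dimension count showing that the stratum of matrices with a single Jordan block of size two and otherwise simple spectrum has dimension $n^2-1$ while all other strata with a repeated eigenvalue are strictly smaller, combined with the observation that every component of the locus cut out by the two equations $f_A(\lambda)=f_A'(\lambda)=0$ has codimension at most $2$, so each component must meet the generic stratum. The paper avoids this geometry entirely and instead proves that the number $s$ of distinct absolutely irreducible factors equals $1$ by counting over $\F_p$: it reduces the putative factorisation modulo a suitably chosen split prime (Chebotarev plus Ostrowski's theorem), applies Lang--Weil to get $\#\{A\in\F_p^{n\times n}:\disc(A)=0\}=sp^{n^2-1}+O(p^{n^2-3/2})$, and compares this with Reiner's asymptotic $P_{n,p}(f)=(1+o(1))p^{n^2-n}$ multiplied by Carlitz's count $p^{n-1}$ of monic non-squarefree polynomials, forcing $s=1$. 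So your route is viable in principle and genuinely different from the paper's for the $s=1$ step, but as written it has one incorrect computation (the diagonal family) and one unproven assertion (irreducibility of the incidence variety) exactly where the difficulty lies.
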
 

\begin{proof} 
Assume that  $\disc(\vX)$ can be factored  over the algebraic closure of $\Q$ as 
\begin{equation}
\label{eq:Fact}
\disc(\vX)= \prod_{\nu=1}^s F_\nu(\vX)^{k_\nu} ,
\end{equation}
into $s$ distinct absolutely irreducible polynomials 
$F_1, \ldots, F_s \in K[\vX]$, defined over some number field $K$, with
some multiplicities $k_1, \ldots, k_s \ge 1$.  Our goal is to show first that $s=1$
and then that $k_1 = 1$. 

We first note that for any integer $m \ge 1$, there exists an   integer $a$ 
such that $\gcd\(a(2a+1), m\) = 1$. In fact $a=m-1$ satisfies this condition.

We now use the above observation with $m = n(n-1)/2$ and  fix    some 
integer $a$ with $\gcd\(a(2a+1), n(n-1)/2\) = 1$.
By the Chebotarev Density Theorem, there are infinitely many primes in the progression 
$p \equiv 2a+1 \pmod {n(n-1)}$ which split completely 
in the field $K$.  

We also choose $p$ to be large enough so that by the   Ostrowski's theorem (see~\cite[Corollary~2B]{Schmidt}), 
the factorisation~\eqref{eq:Fact} induces the factorisation 
\begin{equation}
\label{eq:Fact p}
\disc(\vX)  \equiv  \prod_{\nu=1}^s \ovF_\nu(\vX)^{k_\nu}  \pmod \cP , \qquad	\ovF_1, \ldots, \ovF_s \in \F_p[\vX],  
\end{equation} 
in distinct absolutely irreducible polynomials for any prime ideal $\cP$ in $K$ lying over $p$. 

We now observe that the factorisation~\eqref{eq:Fact p}, together with the celebrated result 
of Lang and Weil~\cite[ Theorem~1]{LaWe}, which we use in the form
given by~\cite[Theorem~7.5]{CafMat}, implies that 
\begin{equation}
\label{eq:Count 1}
\# \{A \in \F_p^{n\times n}:~ \disc(A)=0\} = s p^{n^2-1} + O\(p^{n^2-3/2}\).
\end{equation}
On the other hand, by Lemma~\ref{lem:Rei} we have 
\begin{equation}
\label{eq: A->f}
\# \{A \in \F_p^{n\times n}:~  \disc(A) =0 \} =(1+o(1))  p^{n^2-n} N_0 , 
\end{equation}
where $N_0$ is the number of monic non-squarefree polynomials 
in  $ \cP_n(\F_p)$ and thus by the classical result of Carlitz~\cite[Section~6]{Carl}, for any $n \ge 2$, we have 
\begin{equation}
\label{eq: N0}
N_0 = p^{n-1}.
\end{equation}
Combining~\eqref{eq:Count 1},  \eqref{eq: A->f} and~\eqref{eq: N0} and taking $p$ to be sufficiently large
we see that $s=1$. 
 Hence, we can write~\eqref{eq:Fact} as
\[
\disc(\vX) =  F(\vX)^{k} ,
\]
with an absolutely irreducible polynomial $F\in K[\vX]$.

Finally, we want to conclude that $k=1$, that is, $\disc(\vX)$ is absolutely irreducible. It is enough to show that $\disc(\vX)$ cannot be a power of a polynomial by considering some specialisation of the variables $\vX$.

In particular, we consider the matrix 
$$
\vX^*=\diag(J,  2, \ldots, n-1),
$$
where 
\[
J=\(\begin{array}{ll} 0 & X_{1,2}\\ X_{2,1}&0\end{array}\).
\]
From the  Laplace expansion with respect to the first two rows, 
we see that the characteristic polynomial of $\vX^*$ is
\[
f(Y)=\det (\vX^* - Y\cdot I_n) =  g(Y) h(Y) \in  R[Y]
\]
where $R = K[X_{1,2}, X_{2,1} ]$ and 
\begin{align*}
  & g(Y) = \det (J - Y\cdot I_2) = Y^2 -   X_{1,2} X_{2,1} ,\\
  & h(Y) = \det \diag(2-Y, \ldots, n-1-Y) = \prod_{i=2}^{n-1} \(i-Y\).
\end{align*} 
The discriminants of $g$ and $h$ are
\[
\disc(g)= 4 X_{1,2} X_{2,1} \mand \disc(h) = \prod_{2 \le i < j \le n-1} (i-j)^2,
\]
and the resultant of $g$ and $h$ is given by
$$
\Res(g,h) =  \prod_{i=2}^{n-1} \(i^2 - X_{1,2} X_{2,1}\).
$$
Recall that 
$$\disc(f)=\disc(g)\disc(h)\Res(g,h)^2. $$ 
Since $\disc(g)$ and $\Res(g,h)$ are relatively prime polynomials in $R$, and $\disc(g)$ is squarefree while $\disc(h)$ is a scalar in $R$, we deduce that $\disc(f)$ cannot be a power of a polynomial in $R$, which concludes the proof.
\end{proof}

  \section{Proof of Theorem~\ref{thm:MulEigenval}}

\subsection{The general upper bound}
\label{sec:upper} 
The idea of the proof is, for arbitrary $1\le r\le n/2$, to bound $$R_n(H)\le R_{n,r}(H)+Q_{n,r}(H),$$
where $R_{n,r}(H)$ is the number  of  matrices $\cM_n\(\Z; H\)$ 
whose characteristic polynomial has  an irreducible factor of degree $r$, and 
 $Q_{n,r}(H)$ the number  of  matrices $\cM_n\(\Z; H\)$  with at least one multiple eigenvalue of degree at least $r+1$. The quantity $R_{n,r}(H)$ has been estimated in Lemma~\ref{lem:low deg eigenval}, and below we estimate $Q_{n,r}(H)$.

If $A\in \cM_n\(\Z; H\)$ has all eigenvalues of degree at least $r+1$ and at least one multiple eigenvalue 
then its characteristic polynomial $f \in \Z[X]$  is of the form $f = g^2h$ with monic polynomials 
\[
 g,h \in \Z[X] \mand \deg g \ge r+1.
 \]
We now choose  $p$ as the smallest prime with $p \ge 2H+1$. 
Clearly the reduction of $A$ modulo $p$ has characteristic polynomial 
 $f_p \in \F_p[X]$   of the form $f_p = g_p^2h_p$
 with monic polynomials 
\[
 g_p,h_p \in \Z[X] \mand  \deg g_p \ge r+1.
 \]
Thus there are 
\[
\sum_{r+1\le d \le n/2} p^{d} \cdot p^{n-2d} = \sum_{r+1\le d \le n/2} p^{n-d} \le 2 p^{n-r-1}
\]
possibilities of $f_p$. By  Lemma~\ref{lem:Rei} the number of $n \times n$ matrices over $\F_p$ 
with characteristic polynomials of this type is  
\begin{equation}
\label{eq:Count Fp}
O\(p^{n^2-n} \cdot p^{n-r-1}\) = O\(p^{n^2-r-1}\). 
\end{equation}
Since $p \ge 2H+1$, each of these  matrices over $\F_p$  can be lifted to at most one matrix 
$A\in \cM_n\(\Z; H\)$. Furthermore, since by our choice, $p = O(H)$, we see from~\eqref{eq:Count Fp}
that there are 
 \begin{equation}
\label{eq:High Deg}
Q_{n,r}(H)  =  O\(H^{n^2-r-1}\)
\end{equation}
such matrices $A\in \cM_n\(\Z; H\)$. 

Combining Lemma~\ref{lem:low deg eigenval} and~\eqref{eq:High Deg}, we derive
 \begin{equation}
\label{eq:Gen Bound}
R_n(H) \le R_{n,r}(H) + Q_{n,r}(H)  \ll H^{n^2-n+r(r+1)/2} \log H + H^{n^2-r-1}.
\end{equation}
Since $r\le n/2$ is arbitrary, the result follows. 


\subsection{The cases $n=2,5,7,8$} 
When $n=2$, then any matrix counted by $R_2(H)$ with eigenvalue $\lambda$  has the  characteristic polynomial of the form 
\[
(X-\lambda)^2.
\]
Since there are $O(H)$ possibilities for $\lambda$, applying Lemma~\ref{lem:char poly}, we conclude that for each possible eigenvalue $\lambda$ there are $H^{1+o(1)}$ matrices with characteristic polynomial $(X-\lambda)^2$. Therefore,
\[
R_2(H)\le H\cdot H^{1+o(1)}=H^{2+o(1)}.
\]

The improvement for the cases $n=5,7,8$ comes from the situation when $r$ satisfies the inequalities 
\begin{equation}
\label{eq:cond fact}
r+1>n/3 \mand (r+1)(r+2)<2n.
\end{equation}
This enforces $n<15$.

Indeed, following the proof in Section~\ref{sec:upper}, we note that, in the case when  all eigenvalues are of degree at least $r+1$ and at least one is a multiple eigenvalue, 
then its characteristic polynomial $f \in \Z[X]$  is of the form $f = g^2h$ with monic polynomials 
\[
 g,h \in \Z[X] \mand \deg g,\deg h \ge r+1.
 \]
Looking at the degree, one needs $n \ge 2(r+1)+r+1$, that is, $r+1\le n/3$. Therefore, if $r+1>n/3$ such a factorisation of $f$ cannot occur. Hence for such pairs $(n,r)$ we simply have 
\[
R_n(H) \le R_{n,r}(H)   \ll H^{n^2-n+r(r+1)/2} \log H .
\]

The second inequality in~\eqref{eq:cond fact} comes from imposing that the bound obtained on $R_{n,r}(H) $ 
in  Lemma~\ref{lem:low deg eigenval}
is smaller than the bound~\eqref{eq:High Deg} on  $Q_{n,r}(H) $ and hence the above 
bound is stronger than~\eqref{eq:Gen Bound}, 

Simple calculations show that this is possible for $n=5$ (with $r=1$), $n=7$ (with $r=2$) and $n=8$ (with $r=2$), 
and we get the desired upper bounds on $R_5(H)$, $R_7(H)$ and $R_8(H)$. 

\subsection{The lower bound} 
The lower bound comes from the following simple construction: we consider
$n\times n$ triangular matrices 
\[A=
\begin{pmatrix}
a & b & * & \cdots & *\\ 
0 & a & * & \cdots & *\\
0 & 0 & a_{3,3} & \cdots & *\\
\cdots & \cdots & \cdots & \cdots & \cdots\\
0 & 0 & 0 & \cdots & a_{n,n}
\end{pmatrix}, 
\]
with $a,b\in\Z$, $b\ne 0$, $|a|,|b|\le H$ and $a_{i,i}\ne a$, $|a_{i,i}|\le H$, $i=3,\ldots,n$.  

Clearly, the minor $M_{2,1}$ of 
\[
A-a\cdot I_n=
\begin{pmatrix}
0 & b & * & \cdots & *\\ 
0 & 0 & * & \cdots & *\\
0 & 0 & a_{3,3}-a & \cdots & *\\
\cdots & \cdots & \cdots & \cdots & \cdots\\
0 & 0 & 0 & \cdots & a_{n,n}-a
\end{pmatrix}, 
\]
(that is, the submatrix of  $A-a\cdot I_n$ obtained by removing the second row and the first column) is an upper triangular matrix with non-zero elements on the main diagonal, and hence it is non-singular. 
Therefore,  $A-a\cdot I_n$ is of rank $n-1$ and thus the geometric multiplicity of the eigenvalue $a$, satisfies  
\[
\dim \Ker(A- a\cdot I_n)=1.
\]
Since this is different from the algebraic multiplicity of $a$, which is $2$ (the multiplicity of $a$ as a root of the characteristic polynomial), the matrix $A$ is not diagonalisable.

Therefore, 
\[
R_n(H)\ge N_n(H)\gg H \cdot H^{n-2}\cdot H^{n(n-1)/2} = H^{(n^2+n-2)/2},
\] 
which concludes the proof.

   \section{Proof of Theorem~\ref{thm:Discr}}
   
 \subsection{The cases $n=2$}  
 The  characteristic polynomial of 
\[A=
\begin{pmatrix}
a_{11} & a_{12}\\ 
a_{21}& a_{22}
\end{pmatrix}, 
\]
is $X^2 - \(a_{11} + a_{22} \) X +  \(a_{11}  a_{22} - a_{12}  a_{21}\)$
and hence its discriminant is
\[
 \(a_{11} + a_{22} \)^2 - 4 \(a_{11}  a_{22} - a_{12}  a_{21}\)
 =    \(a_{11} - a_{22} \)^2 + 4a_{12}  a_{21}. 
 \]
 Considering the cases $\(a_{11} - a_{22} \)^2  = d$ and 
 $ \(a_{11} - a_{22} \)^2   \ne  d$
 separately, and using the classical bound on the divisor function, 
  see~\cite[Equation~(1.81)]{IwKow}, 
 we conclude the proof in this case.

   \subsection{Integer points on varieties}

We recall the following  general bound of Salberger~\cite[Theorems~0.4 and~7.4]{Salb}  (see also 
Browning, Heath-Brown and Salberger~\cite[Corollary~2 and Lemma~8]{BrHBSa}), 
which in turn improves the result of Pila~\cite[Theorem~A]{Pila} under 
some additional constraints.

\begin{lemma}\label{lem:BH-BS} Let $F(X_1, \ldots, X_m)\in \Z[X_1, \ldots, X_m]$ 
be   a polynomial of degree $ \deg F\ge 4$ in  $m\ge 3$ variables
 and with an absolutely irreducible highest form.  
 Then
\begin{align*}
\# \{(x_1, \ldots, x_m) \le  \(\Z\cap[-H,H]\)^m:~
F(x_1, \ldots, x_m)& = 0\}\\
& \le  H^{m-2+ o(1)},  
\end{align*}
as $H \to \infty$.
 \end{lemma}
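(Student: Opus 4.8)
The natural route is the \emph{determinant method} of Bombieri--Pila and Heath-Brown, in the global form developed by Salberger, the hypothesis on the highest form being exactly what lets one pass to a well-behaved projective model. Writing $F=F_d+F_{d-1}+\dots+F_0$ with $F_d$ the highest form and homogenising with a new variable $X_0$, I would first form the projective closure $\overline{X}\subset\mathbb{P}^m$ of the affine hypersurface $X=\{F=0\}\subset\mathbb{A}^m$. Absolute irreducibility of $F_d$ forces $F$ itself to be absolutely irreducible, so $\overline{X}$ is an integral hypersurface of degree $d=\deg F\ge 4$ whose section at infinity $\{F_d=0\}$ is integral of dimension $m-2$. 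An integer point of $(\Z\cap[-H,H])^m$ on $F=0$ is a rational point of height $\ll H$ on $\overline{X}$; fibring over the first $m-1$ coordinates already gives the trivial bound $H^{m-1}$, so the entire content of the lemma is to save one full dimension and reach $H^{m-2+o(1)}$.

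I would take as a known black box the dimension growth bound of the same circle of ideas: any integral projective variety of dimension $e$ and degree at least $2$ carries $\ll H^{e+o(1)}$ rational points of height $\ll H$. The genuinely new step is to confine the integer points on $X$ to a family of auxiliary hypersurfaces of controlled size. This is the determinant step proper: for a prime $p$ and a fixed smooth $\F_p$-point of $\overline{X}$, the integer points reducing to it are assembled into the matrix of monomial values, and an $M\times M$ minor (with $M=\binom{m+D}{m}$ the number of monomials of degree at most $D$) is bounded above by Hadamard's inequality, the entries being $\ll H^D$, and below by the power of $p$ forced by the common reduction. For suitably coupled $D$ and $p$ every such minor vanishes, so the points satisfy a nonzero auxiliary polynomial of degree at most $D$, which one arranges not to be divisible by $F$. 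Running this globally over a range of primes and optimising $D$ confines all the points of height $\ll H$ on $X$ to $H^{o(1)}$ auxiliary hypersurfaces of bounded degree, none containing $X$.

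Each such auxiliary hypersurface meets $X$ in a variety of dimension $m-2$, which the dimension growth bound counts with $\ll H^{m-2+o(1)}$ points; summing over the $H^{o(1)}$ of them yields the claimed $H^{m-2+o(1)}$. The remaining points are those on the accumulating linear subvarieties contained in $X$, and these are handled separately: since $F$ is irreducible, $X$ contains no hyperplane, so every flat inside it has dimension at most $m-2$ and contributes $\ll H^{m-2}$ points, a positive-dimensional family of such flats being the one thing that must be controlled by hand. It is precisely here, and in the confinement step, that the restriction $\deg F\ge 4$ is used: it is the threshold at which the determinant method forces the points onto $H^{o(1)}$ proper subvarieties, whereas for quadrics and cubics the points are too dense (or the auxiliary degrees too large) and only the weaker exponent $m-1+o(1)$ survives.

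I expect the real difficulty to lie not in any single estimate but in the uniform global bookkeeping behind the confinement step: guaranteeing that the auxiliary hypersurfaces have degree bounded solely in terms of $d$ and $m$, that none is divisible by $F$, and that their number is $H^{o(1)}$, all uniformly in the coefficients of $F$, while simultaneously disposing of the flat subvarieties of $X$. This optimisation across all primes is the technical heart of Salberger~\cite{Salb} and of Browning, Heath-Brown and Salberger~\cite{BrHBSa}, which is why the estimate is quoted here rather than reproved.
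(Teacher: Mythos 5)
The paper offers no proof of this lemma: it is quoted directly from Salberger~\cite[Theorem~7.4]{Salb} and Browning, Heath-Brown and Salberger~\cite[Corollary~2 and Lemma~8]{BrHBSa}, which is exactly the conclusion you reach, and your outline of the global determinant method (absolute irreducibility of $F$ forced by that of its highest form, confinement of the integer points to $H^{o(1)}$ auxiliary hypersurfaces not divisible by $F$, dimension growth applied to the intersections, separate control of linear subvarieties, with $\deg F \ge 4$ as the operative threshold) is a faithful summary of the strategy of those references. So your proposal is correct and takes essentially the same approach as the paper, namely deferring to the cited results after identifying why the hypothesis on the highest form makes them applicable.
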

 
%

      \subsection{Concluding the proof}
Let now $n \ge 3$.
 We observe that the characteristic polynomial of a matrix
  \[
\vX = \(X_{ij}\)_{1 \le i,j \le n} 
\]  
whose entries are considered as variables, is of the form
\[
f_\vX(Z) = Z^n + F_1 Z^{n-1} + \ldots +   F_{n-1} Z+ F_n, 
\]
where $F_i \in \Z[\vX]$ is a polynomial  in $n^2$ variables of total degree $i$, 
$i =1, \ldots, n$.  Hence,  $\disc \(f_\vX\)$ is also a polynomial in $X_{ij}$, $1 \le i,j \le n$.

We now investigate the degree of $\disc \(f_\vX\)$ as a polynomial 
 in $X_{ij}$, $1 \le i,j \le n$. Since $f_\vX$ is monic and square-free, the discriminant is given by
 $$
 \disc \(f_\vX\)=\prod_{i<j}(\lambda_j-\lambda_i)^2,
 $$
 where $\lambda_i$, $i=1,\ldots,n$, are the distinct eigenvalues. This polynomial is symmetric in $\lambda_1,\ldots,\lambda_n$, and thus can be expressed as a polynomial in the $n$ symmetric polynomials in $\lambda_1,\ldots,\lambda_n$, which in turn are homogeneous polynomials in the entries of $\vX$.  
Since $\disc \(a f_\vX\)=a^{n(n-1)}\disc \(f_\vX\)$ for any $a\in\C$, we conclude that $\disc \(f_\vX\)$ is a homogeneous polynomial with 
\begin{equation}
\label{eq: deg X}
 \deg \disc \(f_\vX\)=n(n-1).
\end{equation}


Since $n \ge 3$, by  Lemma~\ref{lem:abs irred discr} and also using~\eqref{eq: deg X}, we can apply  Lemma~\ref{lem:BH-BS} with $F$ replaced by $ \disc \(f_{\vX}\) - d$ and $m = n^2$, which instantly implies the desired result. 
 

  \section{Other applications of our approach}
 \label{sec:Appl}
  
  \subsection{Matrix lifting} We note that the  generalisation of the result from~\cite{Shp}, given in 
Lemma~\ref{lem:shift sing matr}, allows us to estimate then number of modular lifts of matrices 
with a prescribed determinant. Namely, given a positive integer $q$ and a matrix  
$B = \(b_{ij}\)_{1 \le i,j \le n} $ with $b_{ij} \in \{0, \ldots, q-1\}$, $1 \le i,j \le n$, we consider the set 
\[
\cT_n(d,H; q, B)=\{A\in  \cM_n(\Z; H) :~ A \equiv B \pmod q, \ \det A=d\}.
\]
For $d=1$, that is for $\SL_n(\Z)$ matrices, a related question about the smallest $H$ (as a function of $q$) for which 
$\cT_n(1,H; q, B) \ne \emptyset$  
has recently attracted a lot 
of attention. For example,  see~\cite[Section~1.3]{AsBl} for an upper bound on such $H$
in  the case of $d=1$ and almost all $B$ with  $B \equiv I_n \pmod q$ and~\cite{KaVa} for a lower bound on such $H$ 
in the worst case; see also references therein. This can also be asked
for other values of $d$. Here we remark that Lemma~\ref{lem:shift sing matr} 
applied with $K = - q^{-1} B$ and $\alpha = d/q^n$ (and with $\rf{H/q}$ instead
of $H$) immediately implies that for $H \ge 2q$ we have 
\begin{equation}
\label{eq:T-OS}
\# \cT_n(d,H; q, B) \ll (H/q)^{n^2-n} \log (H/q)
\end{equation}
uniformly over $d$ 
and $B$.   This can be compared with the 
 result of  Assing, Blomer and Nelson~\cite[Corollary~1.7]{AsBlNe}   
(see also~\cite[Theorem~1.4]{AsBl})  
which covers the case when  $d =1$ and  $B = I_n$  
is equivalent to the bound
\begin{equation}
\label{eq:T-AB}
\# \cT_n(1,H; q, I_n) \le \( \frac{ H^{n^2-n}}{q^{n^2-1}}  +H^{(n^2-n)/2}\) H^{o(1)}.
\end{equation}
The  bound~\eqref{eq:T-OS} is more general and stronger than~\eqref{eq:T-AB} for large $q$, namely,  for 
\[
q  \ge H^{1/2 + \varepsilon}
\]
for any fixed $\varepsilon > 0$.

 \subsection{Singular matrix polynomials} 
We now give   an applications of Lemma~\ref{lem:shift sing matr} to counting singular 
 matrix polynomials   $F(A)$ with a given $F \in \C[X]$ and $A \in   \cM_n(\Z; H)$. 
 More precisely, we have 
 \begin{equation}
\label{eq:f(A)-Sing}
\{A\in  \cM_n(\Z; H) :~\det F(A)=0\}\le H^{n^2-n+o(1)}.
\end{equation}
Indeed, the eigenvalues of $F(A)$ are of the shape $F(\lambda)$, 
where $\lambda$ runs through the eigenvalues of $A$. Hence, if 
$\det F(A)=0$ then we have $\det (A -\rho I_n) = 0$ where $\rho$ is one of the roots of $F$ 
and as before,  $I_n$ denotes the $n\times n$ identity matrix. 
Invoking Lemma~\ref{lem:shift sing matr}, we obtain~\eqref{eq:f(A)-Sing}.
  
   \subsection{Condition number of integral matrices} 
   
 We recall that the {\it condition number\/} of an $n\times n$ matrix $A$ (with arbitrary complex entries) 
 is given by 
\[
  \kappa(A) = \frac{\max_{i=1, \ldots, n} \sigma_i(A)}{\min_{i=1, \ldots, n} \sigma_i(A)}, 
\]
where $\sigma_1(A), \ldots, \sigma_n(A)$ are the singular values of $A$, that is,
\[
\sigma_i(A)= \sqrt{\lambda_i\(A A^T\)}, \]
where $\lambda_i\(A A^T\)$ are the eigenvalues of $A A^T$, $i=1, \ldots, n$,  
and $A^T$ denotes the transposition of $A$.  The condition number is responsible for the
numerical stability of many standard linear algebra algorithms (the smaller the better) 
and its distribution has been studied for a large variety of families of matrices, 
see~\cite{AnWe,CJMS, Dong, GoTi, RuVe, SUY, TaVu1, TaVu2, TaVu3} and references therein.
Here we address a discrete analogue of this question. For example, one can think about 
random matrices from $\cM_n(\Z; H)$ as obtained from random real matrices after 
decimal approximations of their entries, with a given precision, and then scaling the
new entries to make them integer. 

Since for $A\in  \cM_n(\Z; H)$,   the entries of $AA^T$ are of size  $O(H^2)$, by~\cite{Gersch} 
we conclude that 
\[
\max_{i=1, \ldots, n} \sigma_i(A) \ll H.
\]
This also implies that if  $\min_{i=1, \ldots, n} \sigma_i(A)\le  H/L $ then 
\[
|\det A| = \prod_{i=1}^n \sigma_i(A) \ll L^{-1}  H^{n} .
\]
Thus, we see from Lemma~\ref{lem:shift sing matr},  or~\cite[Theorem~4]{Shp}, 
that for all but $O\(L^{-1} H^{n^2}  \log H\)$ matrices  $A\in  \cM_n(\Z; H)$, 
we have $\kappa(A) \le L$. For example, taking $L= \log H \log \log H$, 
we see that   for almost all matrices  from  $\cM_n(\Z; H)$
the condition number is  reasonably small, namely, at most  $\log H \log \log H$.

\section*{Acknowledgement}
The authors would like to thank Wadim Zudilin  for useful comments and encouragement. The authors are grateful to the referee for the very careful reading and valuable comments, which improved some parts of the paper.

This work  was  supported, in part,  by the Australian Research Council Grants DP230100530 and DP230100534.


\begin{thebibliography}{99}


\bibitem{ALPS}
A. Abrams, Z. Landau, J. Pommersheim and N. Srivastava, 
`On eigenvalue gaps of integer matrices',  
{\it Math.  Comp.\/}, {\bf 94} (2025), 853--862.

\bibitem{Afif} M. Afifurrahman, `Some counting questions for matrix products',
{\it Bull. Aust. Math. Soc.\/}, {\bf 110}  (2024), 32---43. 

\bibitem{AnOD} T. C. Anderson and E. M. O'Dorney, `Galois groups of random integer matrices',
{\it Preprint\/}, 2025, available  from \url{https://arxiv.org/abs/2506.06463}. 


\bibitem{AnWe} W. Anderson and M. T. Wells, `Exact distribution of the condition 
number of a Gaussian matrix',
 {\it SIAM J.  Matrix Analysis and Appl.\/}, {\bf  31} (2009),  1125--1130. 
 
\bibitem{AsBl}
E. Assing and V. Blomer, 
`The density conjecture for principal congruence subgroups',
{\it  Duke Math. J.\/},  {\bf 173} (2024), 1359--1426. 

 
\bibitem{AsBlNe}
E. Assing, V. Blomer and P. D. Nelson, 
`Local analysis of the Kuznetsov formula and the density conjecture',
{\it Preprint\/}, 2024, available  from \url{https://arxiv.org/abs/2404.05561}. 

\bibitem{BrHBSa}
T. D. Browning, R. Heath-Brown and P. Salberger, 
`Counting rational points on algebraic varieties', 
{\it Duke Math. J.\/}, {\bf 132} (2006), 545--578.

 \bibitem{BOS} K. Bulinski, A. Ostafe and I. E. Shparlinski,  
 `Counting embeddings of free groups into $\SL_2(\mathbb{Z})$ and its subgroups',
 {\it  Ann. Scuola Normale Pisa\/},   {\bf 26} (2025), 2353--2364. 
 
 \bibitem{CafMat} A. Cafure and G. Matera, 'Improved explicit estimates on the number of
solutions of equations over a finite field', {\it Finite Fields and Their Appl.}, {\bf 12} (2006), 155--185.

 \bibitem{CJMS} M. Campos, M. Jensen, M. Michelen and J. Sahasrabudhe, 
`The least singular value of a random symmetric matrix',
{\it Forum of Math: Pi\/},  (to appear).

 \bibitem{Carl} L. Carlitz,  `The arithmetic of polynomials in a Galois field',
{\it  Amer. J. Math.\/}, {\bf  54} (1932),  39--50. 

\bibitem{Dong} X. Dong, 
`The smallest singular value of a shifted random matrix', 
{\it  J. Theoret. Probab.\/}, {\bf 36} (2023),  2448--2475. 

 \bibitem{DRS} W. Duke, Z. Rudnick and P. Sarnak, 
`Density of rational points on affine homogeneous varieties',
 {\it Duke Math. J.\/} \textbf{71} (1993), 143--179.

\bibitem{E-BLS} D.  El-Baz, M. Lee and A.  Str{\"o}mbergsson, 
`Effective equidistribution of primitive rational points on expanding horospheres',
{\it Preprint\/}, 2022, available  from \url{https://arxiv.org/abs/2212.07408}.   
 
 
\bibitem{Gersch} S. Gerschgorin,  `{\"U}ber die Abgrenzung der Eigenwerte einer Matrix',
{\it Izv. Akad. Nauk. USSR Otd. Fiz.-Mat. Nauk\/}, {\bf 6} (1931),   749--754.
 
\bibitem{GN}  A. Gorodnik and A. Nevo, `Lifting, restricting and sifting integral points on affine homogeneous varieties', {\it Compos. Math.\/}, {\bf 148} (2012), 1695--1716. 

\bibitem{GNY} A. Gorodnik, A. Nevo and G. Yehoshua, `Counting lattice points in norm balls on higher rank simple Lie
groups', {\it Math. Res. Lett.\/}, {\bf 24} (2017), 1285--1306. 


\bibitem{GoTi} F. G{\"o}tze and A. Tikhomirov, 
`On the largest and the smallest singular value of sparse rectangular random matrices',
{\it Electron. J. Probab.\/}, {\bf 28} (2023), Art.~27, 1--18.

\bibitem{HOS} P. Habegger, A. Ostafe and I. E. Shparlinski,  `Integer matrices with a given characteristic polynomial and multiplicative dependence of matrices',
\textit{Preprint}, 2022, available  from \url{https://arxiv.org/abs/2203.03880}. 

\bibitem{HLM} A. J. Hetzel, J. S. Liew and K. Morrison, 
`The probability that a matrix of integers is diagonalizable', 
{\it Amer. Math. Monthly\/}, {\bf 114} (2007), 491--499. 

\bibitem{IwKow}
 H. Iwaniec and E. Kowalski,
{\it Analytic number theory}, Amer.  Math.  Soc.,
Providence, RI, 2004. 

\bibitem{KaVa}
A. Kamber and P.  P. Varju, 
`Lifting all elements in $\SL_n(\Z/q\Z)$',
{\it Preprint\/}, 2023, available  from \url{https://arxiv.org/abs/2310.10269}. 
 
\bibitem{Kat}  Y. R. Katznelson,
`Singular matrices and a uniform bound for 
congruence groups of ${\mathrm SL}_n(\Z)$',
{\it Duke Math. J.\/}, {\bf 69} (1993), 121--136.
 
\bibitem{LaWe}  S. Lang and A. Weil, `Number of points of varieties in finite fields',
{\it Amer. J.  Math.\/},  \textbf{76} (1954), 819--827.  

%



 

\bibitem{Pila} J. Pila,
`Density of integral and rational points on varieties',
{\it Ast\'{e}rique\/}, {\bf 228} (1995), 183--187.

\bibitem{Rei} I. Reiner, `On the number of matrices with 
given characteristic polynomial', 
{\it Illinois J. Math.\/}, {\bf 5} (1961), 324--329.

\bibitem{RuVe}
M. Rudelson and R. Vershynin, 
`Non-asymptotic theory of random matrices: extreme singular values], 
{\it  Proc. Intern. Congress of Math.\/}, Vol,~III, Hindustan Book Agency, New Delhi, 2010, 
1576--1602.

\bibitem{Schmidt} W. M. Schmidt,
{\it Equations over finite fields: An elementary approach\/}, Springer-Verlag,
Berlin-Heidelberg-New York, 1976.

\bibitem{Salb} P. Salberger,  `Counting rational points on projective
varieties', {\it Proc. London Math. Soc.\/}, {\bf 126}  (2023), 1092--1133.
 
\bibitem{Shp} I. E. Shparlinski, `Some counting questions for matrices with restricted entries',  
{\it Lin. Algebra Appl.\/}, {\bf 432} (2010), 155--160.


\bibitem{SUY} M. Singull,  D. Uwamariya and  X. Yang, 
`Large-deviation asymptotics of condition numbers of random matrices'
{\it J. Appl. Prob.\/}, {\bf 58} (2021),  1114--1130.

\bibitem{TaVu1} T. Tao and V. Vu, `Inverse Littlewood--Offord theorems and the 
condition number of random matrices',  {\it Ann. Math.\/}, {\bf 69}, (2009), 595--632.

\bibitem{TaVu2} T. Tao and V. Vu, `Smooth analysis of the condition number 
and the least singular value',  {\it Math. Comput.\/}, {\bf 79} (2010), 2333--2352.

\bibitem{TaVu3} T. Tao and V. Vu, `Random matrices: the distribution of the smallest 
singular values',  {\it Geom. Funct. Anal.\/}, {\bf 20} (2010),   260--297. 

\end{thebibliography}
\end{document}